\theoremstyle{plain}
\newtheorem{theorem}{Theorem}[section]
\newtheorem{lem}[theorem]{Lemma}
\newtheorem{prop}[theorem]{Proposition}
\theoremstyle{definition}
\newtheorem{defn}[theorem]{Definition}
\newtheorem{obs}[theorem]{Remark}
\numberwithin{equation}{section}
\numberwithin{figure}{section}
\newcommand{\ccp}{{colonization and collapse process}}
\newcommand{\la}{\lambda}
\newcommand{\ee}{\epsilon}
\begin{document}

\baselineskip=18pt

\title{Colonization and Collapse}

\author[F\'abio P. Machado]{F\'abio~Prates~Machado}
\address[F\'abio P. Machado]{Statistics Department, Institute of Mathematics and Statistics, University of S\~ao Paulo, CEP 05508-090, S\~ao Paulo, SP, Brazil.}
\email{fmachado@ime.usp.br}
\thanks{Research supported by CNPq (141046/2013-9 and 310829/2014-3) and FAPESP (09/52379-8).}

\author[Alejandro Rold\'an]{Alejandro~Rold\'an-Correa}
\address[Alejandro Rold\'an]{Instituto de Matem\'aticas, Universidad de Antioquia, Calle 67, no 53-108, Medellin, Colombia}
\email{alejandro.roldan@udea.edu.co}

\author[Rinaldo B. Schinazi]{Rinaldo B. Schinazi}
\address[Rinaldo B. Schinazi]{Math Department, Institute of Mathematics, University of Colorado at Colorado Springs, 80933-7150, Colorado Springs, CO, USA.}
\email{rschinaz@uccs.edu}

\keywords{spatial stochastic model, percolation, metapopulation.}
\subjclass[2010]{60K35, 60G50}
\date{\today}

\begin{abstract} 
Many species live in colonies that thrive for a while and then 
collapse. Upon collapse very few individuals survive. The survivors start new 
colonies at other sites that thrive until they collapse, and so on. We introduce spatial 
and non-spatial stochastic processes for modeling such population 
dynamic. Besides testing whether dispersion helps survival in a model experiencing large 
fluctuations, we obtain conditions for the population to get extinct or to survive.
\end{abstract}

\maketitle

\section{Introduction}
\label{S: Introduction}

A metapopulation model refers to populations that  are spatially structured into assemblages of 
local populations that are connected via migrations. Each local population evolves without spatial 
structure;  it can increase or decrease, survive, get extinct or migrate in different 
ways. Many biological phenomena may influence the dynamics of a metapopulation; species  adopt 
different strategies to increase its survival probability. See Hanski~\cite{Hanski} for more about metapopulations.

Some metapopulations  (such as ants) live in colonies that thrive for a while and then collapse. Upon collapse very few
individuals survive. The survivors start new colonies at other
vertices that thrive until they collapse, and so on.
In this paper, we introduce stochastic models to model this population dynamic and to test whether dispersion helps survival.
Our non-spatial stochastic models are reminiscent of catastrophe models, see Brockwell~\cite{Brockwell}. However, instead of having the whole population living in a single colony we now have the population dispersed in a random number of colonies. We show that dispersion of the population helps survival, see Section 4. Our spatial model is similar to a contact process, see Liggett~\cite{Liggett}, in that individuals give birth to new individuals in neighboring sites. However, in our model there is no limit in the number of individuals per site. Moreover, at collapse time a site loses all its individuals at once. This introduces large fluctuations that are non standard in interacting particle systems and complicate the analysis, see Section 5.

This paper is divided into five sections. In Section 2 we define a spatial stochastic process for colonization and collapse and present some of its properties. In Section 3 the main results are established. In Section 4 we introduce a non-spatial version of our model and compare it to other models known in the literature. Finally, in Section 5 we prove the results stated in Section 3.

\section{Spatial model}
We denote by $\mathcal{G} = (V, E)$ a connected non-oriented graph of locally bounded degree, 
where $V: = V (\mathcal{G})$ is the set of vertices of $\mathcal{G}$, and 
$E: = E (\mathcal{G})$ is the set of edges of  of $\mathcal{G}$. Vertices are considered 
neighbors if they belong to a common edge. The \textit{degree} of a vertex $x \in V$ is the number 
of edges that have $x$ as an endpoint. A graph is \textit{locally bounded} if all its vertices have finite degree. A graph is $k-$\textit{regular} if all its vertices have degree $k.$
The distance $d(x, y)$ between vertices $x$ and $y$ is the minimal amount of edges that one must 
pass in order to go from $x$ to $y.$ By $\mathbb{Z}^d$ we denote the graph whose set of vertices is
$\mathbb{Z}^d$ and the set of edges is  
$\{\left\langle (x_1,\ldots,x_d),(y_1,\ldots,y_d)\right\rangle: |x_1-y_1|+\ldots|x_d-y_d|=1\}.$ 
Besides, by $\mathbb{T}^d$, $d\geq2$, we denote the degree $d+1$ homogeneous tree. \\

At any time each vertex of ${\mathcal G}$ may be either occupied by a colony or 
empty. Each colony is started by a single individual. The number of individuals in 
each colony behaves as a Yule process (i.e. pure birth)  with birth rate $\lambda\geq 0$. To each 
vertex is associated a Poisson process with rate 1 in such a way that when the exponential 
time occurs at a vertex occupied by a colony, that colony collapses and its vertex becomes empty. At the time 
of collapse each individual in the colony survives with a (presumably small) probability 
$p \in (0,1)$ or dies with probability $1-p$. Each individual that survives tries to found 
a new colony on one of the nearest neighbor vertices by first picking a vertex at random. 
If the chosen vertex is occupied, that individual dies, otherwise the individual founds there a 
new colony. We denote by 
$CC(\mathcal{G},\lambda,p)$ the Colonization and Collapse model.\\

The $CC(\mathcal{G},\lambda,p)$ is a continuous time Markov process whose state space is $\mathbb{N}^V$ and whose evolution (status at time $t$) is denoted by $\eta_t$. For a vertex $x \in V$, $\eta_t(x)=i$ means that at the time $t$ there are $i$ 
individuals at the vertex $x$. We consider $|\eta_t| = \sum_{x \in V} \eta_t(x)$.

\begin{defn} 
Let $\eta_t$ be a $CC(\mathcal{G},\lambda,p)$, starting with a finite number of colonies. If  $\mathbb{P}(|\eta_t|\geq1 \text{ for all } t\geq0)>0 $ we say that  $\eta_t$ {\it survives (globally)}. Otherwise, we say that $\eta_t$ {\it dies out (globally)}.
\end{defn}

\begin{obs}
If the process $\eta_t$ starts from an infinite number of colonies, then 
$\mathbb{P}(|\eta_t|\geq1 \text{ for all } t\geq0)=1,$ which means that 
$\eta_t$ survives with probability 1. Still we can see local death according 
to the following definition.
\end{obs}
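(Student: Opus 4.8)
The plan is to show that, on any fixed bounded time interval, at least one of the (infinitely many) initial colonies simply never collapses and therefore single‑handedly keeps $|\eta_t|$ positive on that interval; letting the interval grow then yields the claim. The only feature of the model used is that distinct vertices carry independent rate‑$1$ collapse clocks and that a colony interacts with the rest of $\mathcal G$ only at the instant it collapses.

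Concretely, I would fix $T>0$, choose a countably infinite subset $\{x_1,x_2,\dots\}$ of the vertices occupied at time $0$, and for each $n$ let $B_n$ be the event that the rate‑$1$ clock attached to $x_n$ rings nowhere in $[0,T]$, so that $\bbP(B_n)=e^{-T}>0$. On $B_n$ the colony at $x_n$ does not collapse before $T$, hence during $[0,T]$ its population evolves as a Yule process started from one individual — driven only by the birth clock at $x_n$ — so $\eta_t(x_n)\ge 1$, and a fortiori $|\eta_t|\ge 1$, for every $t\in[0,T]$. Since the collapse clocks at distinct vertices are independent, $\bbP\big(\bigcap_{n=1}^{N}B_n^{c}\big)=(1-e^{-T})^{N}\to 0$, whence $\bbP\big(\bigcup_{n\ge 1}B_n\big)=1$ and therefore $\bbP(|\eta_t|\ge 1 \text{ for all } t\in[0,T])=1$ for every $T$. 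Intersecting over $T\in\bbN$ (a countable intersection of probability‑one events) gives $\bbP(|\eta_t|\ge 1 \text{ for all } t\ge 0)=1$.

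I do not expect a genuine obstacle here; the remark is essentially the observation that one non‑collapsing colony suffices and that, among infinitely many independent colonies, at least one avoids its collapse clock on any prescribed bounded interval. The two points worth stating carefully — and both are routine — are that the process is well defined when started from infinitely many colonies (explosion of $|\eta_t|$ would only reinforce the conclusion, so it is harmless), and that on $B_n$ the colony at $x_n$ really decouples from the rest of the system up to time $T$, which is immediate from the graphical construction because the sole coupling between a colony and the remainder of $\mathcal G$ occurs at its collapse time.
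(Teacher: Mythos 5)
Your argument is correct: the paper states this remark without proof, and your construction --- infinitely many independent rate-$1$ collapse clocks, so on any bounded interval $[0,T]$ almost surely some initial colony never collapses and (being a pure-birth colony that cannot be harmed by immigration onto an occupied site) keeps $|\eta_t|\ge 1$, followed by a countable intersection over $T\in\bbN$ --- is exactly the routine justification the authors leave implicit. No gaps; the two caveats you flag (well-definedness from an infinite configuration, and decoupling of a non-collapsing colony up to time $T$) are indeed the only points needing mention and are handled correctly.
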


\begin{defn} Let $\eta_t$ be a $CC(\mathcal{G},\lambda,p).$ We say that  $\eta_t$ {\it dies locally} if for any vertex   $x \in V$ there is a finite random time $T$ such that $\eta_t(x)=0$ for  all $t>T$. Otherwise we say that $\eta_t$ {\it survives locally}.
\end{defn}

\begin{obs} Local death corresponds to a finite number of colonizations for every vertex. 
It is clear that global death implies local death but the opposite is not always truth. As an
example consider $\eta_t$ a $CC(\mathbb{Z}^3,0,1)$ with $|\eta_0 |=1$. In this case $\eta_t$ can be seen as a symmetric
random walk on $\mathbb{Z}^3$, therefore transient, which implies that $\eta_t$ dies locally but
survives globally.
\end{obs}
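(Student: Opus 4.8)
The plan is to identify, for the special parameters $\lambda=0$ and $p=1$, the evolution of $\eta_t$ started from a single colony with that of a continuous-time simple symmetric random walk on $\mathbb{Z}^3$, and then to deduce the two assertions from P\'olya's transience theorem.

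First I would specialize the description of the model given in Section~2. Since $\lambda=0$, the Yule process governing the size of each colony is constant in time, so every colony consists of exactly one individual for its whole lifetime. Since $p=1$, when the rate-$1$ Poisson clock attached to an occupied vertex rings, the colony collapses and its single individual survives with probability one, then selects a nearest-neighbor vertex uniformly at random. Starting from $|\eta_0|=1$ there is always exactly one colony in the system, so the selected vertex is automatically empty and the individual always founds a new colony there; in particular $|\eta_t|=1$ for every $t\ge 0$, almost surely. Hence $\mathbb{P}(|\eta_t|\ge1\text{ for all }t\ge0)=1$, that is, $\eta_t$ survives globally (in fact with probability one).

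Next I would let $X_t\in\mathbb{Z}^3$ denote the position of this unique individual. By the above, $X_t$ stays put for an $\mathrm{Exp}(1)$ time and then jumps to a uniformly chosen nearest neighbor, independently of the past; thus $(X_t)_{t\ge0}$ is exactly a rate-$1$ continuous-time simple symmetric random walk on $\mathbb{Z}^3$ started at the initial vertex, and $\eta_t(x)=\mathbf{1}\{X_t=x\}$ for every $x\in\mathbb{Z}^3$. This walk is non-explosive, as the number of jumps in $[0,t]$ is Poisson$(t)$, and its embedded discrete-time chain is the simple random walk on $\mathbb{Z}^3$, which is transient by P\'olya's theorem. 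Consequently, for each fixed vertex $x$ the walk $X_t$ visits $x$ only finitely often, almost surely; letting $T=T(x)$ be the time at which $X_t$ leaves $x$ for the last time (and $T=0$ if $x$ is never visited), we obtain $\eta_t(x)=0$ for all $t>T$. Since $x$ was arbitrary, $\eta_t$ dies locally.

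There is essentially no obstacle here. The only two points meriting a word of care are the observation that a newly founded colony never has to compete for an already occupied vertex --- which is immediate in this case because the system always contains exactly one colony, so the ``founding fails if the target is occupied'' clause never triggers --- and the non-explosiveness of the continuous-time walk, which is what makes ``the last visit to $x$'' a well-defined finite time.
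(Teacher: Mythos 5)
Your argument is correct and is exactly the one the paper intends: with $\lambda=0$, $p=1$ and a single initial colony, the process is a rate-$1$ continuous-time simple symmetric random walk on $\mathbb{Z}^3$, so global survival is automatic and local death follows from P\'olya's transience theorem. You merely make explicit the points the remark leaves implicit (the single-colony system never encounters an occupied target, and non-explosiveness makes the last visit time finite), which is a faithful elaboration rather than a different route.
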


By coupling arguments one can see that $\mathbb{P}(|\eta_t|\geq1 \text{ for all } t\geq0)$ is a non-decreasing function of $\lambda$ and also of $p$. So we define  
$$\lambda_c(p, \mathcal{G}):=\inf\{\lambda: \mathbb{P}^{\delta_x}(|\eta_t|\geq1 \text{ for all } t\geq0) >0 \},$$
where $x$ is a fixed vertex, and  $\mathbb{P}^{\delta_x}$ is the law of the process $\eta_t$ starting with one colony at $x$. The function $\lambda_c(p,\mathcal{G})$ is non-increasing on $p$. Moreover, $\lambda_c(1,\mathcal{G})=0$ and $\lambda_c(0,\mathcal{G})=\infty$.

\begin{defn}
Let $\eta_t$ be a $CC(\mathcal{G},\lambda,p)$ with $0<p<1$. We say that $\eta_t$ exhibits \textit{phase transition} (on $\lambda$) if $0<\lambda_c(p,\mathcal{G})<\infty.$ 
\end{defn}

\begin{obs} Using coupling arguments, we can construct $\eta_t$ and $\hat{\eta}_t$ be two copies of $CC(\mathcal{G},\lambda,p)$ such that $\eta_t \leq \hat{\eta}_t$ for all times $t>0$, provided that $\eta_0 \leq \hat{\eta}_0$. This monotonic property implies that if $\eta_t$ survives, $\hat{\eta}_t$ also does.  
Moreover, if $\hat{\eta}_t$ dies out (or dies locally) then $\eta_t$ does too.
\end{obs}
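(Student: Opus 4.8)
The plan is to build both processes on a common probability space through a Harris-type graphical construction, to prove by induction over the (a.s.\ locally finite) sequence of events that the coordinatewise order $\eta_t\le\hat\eta_t$ is preserved for all $t$, and then to read off the three asserted consequences.

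First I would fix the graphical representation. To each vertex $x\in V$ attach a rate-$1$ Poisson process of \emph{collapse marks}, and to the $k$-th collapse mark at $x$ together with the $\ell$-th individual present there attach an independent Bernoulli$(p)$ variable $\xi^{x}_{k,\ell}$ (``survives'') and an independent uniform choice $U^{x}_{k,\ell}$ among the neighbours of $x$. For the Yule births I would use the per-individual description of a pure-birth process: at any moment each individual at a vertex carries its own rate-$\lambda$ exponential clock and, when it rings, produces one new individual at that same vertex; by the lack-of-memory property one is free to refresh all of these clocks at every event time, and this is what makes the coupling legitimate. Now run $\eta_t$ and $\hat\eta_t$ from their respective initial states on this space, with the following coupling device: at each event label the individuals at a vertex $1,2,3,\dots$, pair the individuals of $\eta$ at that vertex with the lowest-labelled, equally many individuals of $\hat\eta$ there (this is meaningful precisely because the invariant $\eta_t(x)\le\hat\eta_t(x)$ will be maintained), feed paired individuals the same birth clock and — at a collapse — the same $\xi$ and $U$, and give the surplus individuals of $\hat\eta$ fresh independent randomness. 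Each marginal is then a genuine $CC(\mathcal{G},\lambda,p)$.

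Next I would establish $\eta_t\le\hat\eta_t$ for all $t$ by induction on the events $0=s_0<s_1<s_2<\cdots$, which a.s.\ do not accumulate (a single Yule process is non-explosive and, on a locally bounded graph, colonizations occur at a locally finite rate, so the argument can be run region by region). Assume $\eta_s\le\hat\eta_s$ for $s<s_m$ and let $s_m$ be the next event, at a vertex $x$. If it is a birth at $x$: the paired individuals at $x$ use shared clocks, so they acquire identical numbers of descendants while $\hat\eta$ may gain extra descendants from its surplus individuals, and nothing changes elsewhere, so the order is kept. If it is a collapse mark at $x$: since $\eta_{s_m^-}(x)\le\hat\eta_{s_m^-}(x)$, a collapse occurs in $\eta$ only if it also occurs in $\hat\eta$, and both reset the $x$-coordinate to $0$; for a neighbour $y$, if $y$ is occupied in $\eta$ (hence in $\hat\eta$) it is left untouched in both (colonizers landing on an occupied vertex die); if $y$ is empty in $\eta$ but occupied in $\hat\eta$, then after the step $\eta(y)\le 1\le\hat\eta(y)$; and if $y$ is empty in both, then whenever a surviving $\eta$-individual picks $y$ so does its $\hat\eta$-partner (shared $\xi$ and $U$ and the subset property of the pairing), whence $\eta(y)\le\hat\eta(y)$ after the step as well. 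Non-neighbours of $x$ are unchanged. Hence $\eta_{s_m}\le\hat\eta_{s_m}$, and letting $m\to\infty$ (so $s_m\to\infty$) yields $\eta_t\le\hat\eta_t$ for all $t\ge0$.

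I expect the only delicate point to be the ``empty–empty neighbour'' case together with the bookkeeping that makes the pairing well defined: colonization is in itself \emph{anti}-monotone — more occupied neighbours make more colonization attempts fail — and one must observe that a colonization attempt which succeeds in $\eta$ but fails in $\hat\eta$ can only target a vertex that is empty in $\eta$ and already occupied in $\hat\eta$, so no inequality is violated; everything else is a routine check. Granting the invariant, the conclusions are immediate. Since $|\eta_t|\le|\hat\eta_t|$ for all $t$ on the coupled space, one has $\{|\eta_t|\ge1\text{ for all }t\ge0\}\subseteq\{|\hat\eta_t|\ge1\text{ for all }t\ge0\}$, so $\mathbb{P}^{\delta}(|\eta_t|\ge1\text{ for all }t)\le\mathbb{P}^{\delta}(|\hat\eta_t|\ge1\text{ for all }t)$ for the corresponding initial laws; this gives ``$\eta_t$ survives $\Rightarrow\hat\eta_t$ survives'' and, taking the contrapositive, ``$\hat\eta_t$ dies out $\Rightarrow\eta_t$ dies out''. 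Finally, since $\eta_t(x)\le\hat\eta_t(x)$ for every $x$ and $t$, if $\hat\eta_t$ dies locally — each $\hat\eta_t(x)$ equal to $0$ for $t$ beyond some finite $T_x$ — then $\eta_t(x)=0$ for $t>T_x$ as well, so $\eta_t$ dies locally too.
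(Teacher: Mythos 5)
Your coupling is correct and is exactly the argument the paper leaves implicit behind the phrase ``coupling arguments'' (the paper itself points to the Harris-type graphical construction of Liggett, Theorem~1.5, Chapter~III, for the analogous joint construction of $\xi_t$ and $\eta_t$). You also correctly isolate the one genuinely non-trivial point --- a colonization attempt that succeeds in $\eta_t$ but fails in $\hat{\eta}_t$ can only target a vertex already occupied in $\hat{\eta}_t$, so the coordinatewise inequality is never violated --- and the remaining verifications (shared collapse marks, paired Yule clocks with independent surplus, and the passage from the pathwise inequality to the statements about global and local survival) are routine.
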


Observe that, as the number of individuals per vertex is not bounded, it is conceivable that the process survives on a finite graph. Next we show that it does not happen.

\begin{prop}
\label{P: ExtinctionFiniteVolume}
For any finite graph and starting from any initial configuratiion, the colonization and collapse process
dies out.
\end{prop}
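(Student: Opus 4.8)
Set $N:=|V(\mathcal{G})|$, let $(\mathcal{F}_t)_{t\ge 0}$ be the natural filtration of the process, and write $\mathbf{0}$ for the empty configuration, which is absorbing. The plan is to establish a uniform one--step estimate: there is a constant $c_0=c_0(\mathcal{G},\lambda,p)>0$ such that $\mathbb{P}(\eta_{k+1}=\mathbf{0}\mid\mathcal{F}_k)\ge c_0$ for every $k\in\mathbb{N}$. Granting this, iterating the tower property gives $\mathbb{P}(\eta_j\neq\mathbf{0}\text{ for }j=1,\dots,m)\le(1-c_0)^m$ for all $m$; and since $\mathbf{0}$ is absorbing the event $\{|\eta_t|\ge 1\text{ for all }t\ge 0\}$ coincides with $\{\eta_j\neq\mathbf{0}\text{ for all }j\in\mathbb{N}\}$, which therefore has probability $0$, i.e.\ the process dies out. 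The essential difficulty is that this bound must be \emph{uniform over all configurations}: a naive ``let every colony collapse leaving no survivor'' scenario fails, because a colony of size $s$ is annihilated at a collapse only with probability $(1-p)^s$, and $s$ may be arbitrarily large. I will get around this with a two--phase scenario on $[k,k+1]$ whose first phase makes no attempt to kill anything, but forces every colony to be young by time $k+\tfrac13$; only then does the second phase attempt the clean sweep.

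\emph{Phase 1.} Let $P_1$ be the event that the rate--$1$ collapse clock of every vertex rings at least once during $[k,k+\tfrac13]$. As $P_1$ depends only on those clocks, $\mathbb{P}(P_1\mid\mathcal{F}_k)=(1-e^{-1/3})^{N}$. On $P_1$ every vertex is emptied at some instant of $[k,k+\tfrac13]$, so any colony present at time $k+\tfrac13$ was founded afterwards and hence has age at most $\tfrac13$; as a colony of age $a$ (started from one individual) has mean size $e^{\lambda a}$ and at most $N$ vertices are occupied, $\mathbb{E}[\,|\eta_{k+1/3}| \mid \mathcal{F}_k,P_1\,]\le Ne^{\lambda/3}$. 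By Markov's inequality, $\mathbb{P}(\,|\eta_{k+1/3}|\le M \mid \mathcal{F}_k,P_1\,)\ge\tfrac12$ with $M:=2Ne^{\lambda/3}$.

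\emph{Phase 2.} I will show that for \emph{every} configuration $\xi$,
\[
\mathbb{P}(\eta_{k+2/3}=\mathbf{0}\mid\eta_{k+1/3}=\xi)\ \ge\ (1-e^{-1/3})^{N}\,\rho^{\,|\xi|},\qquad \rho:=\frac{(1-p)\,e^{-\lambda/3}}{p+(1-p)\,e^{-\lambda/3}}\in(0,1).
\]
For each vertex $z$ with $\xi(z)\ge 1$, let $G_z$ be the event that $z$'s collapse clock rings during $(k+\tfrac13,k+\tfrac23)$ and that at the first such ring every individual of $z$'s colony dies. While $z$ stays occupied its colony is exactly a Yule process of rate $\lambda$ started from $\xi(z)$ at time $k+\tfrac13$ (survivors only ever colonize empty sites), so, writing $Y_m(u)$ for the size at time $u$ of such a process, $\mathbb{P}(G_z\mid\xi)=\int_0^{1/3}e^{-u}\,\mathbb{E}[(1-p)^{Y_{\xi(z)}(u)}]\,du\ge(1-e^{-1/3})\,\rho^{\xi(z)}$, since $\mathbb{E}[(1-p)^{Y_m(u)}]=\big(\mathbb{E}[(1-p)^{Y_1(u)}]\big)^{m}$ and $\mathbb{E}[(1-p)^{Y_1(u)}]=\frac{(1-p)e^{-\lambda u}}{p+(1-p)e^{-\lambda u}}$ is nonincreasing in $u$, hence $\ge\rho$ for $u\le\tfrac13$. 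The events $G_z$ involve disjoint families of clocks and coins, hence are independent; and on $\bigcap_{z:\xi(z)\ge 1}G_z$ no survivor ever appears during $(k+\tfrac13,k+\tfrac23)$, so nothing is (re)colonized, each occupied vertex is emptied at its first ring and stays empty, and $\eta_{k+2/3}=\mathbf{0}$. Multiplying the per--vertex bounds gives the displayed inequality.

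Putting the phases together, and using that $\mathbf{0}$ is absorbing,
\[
\mathbb{P}(\eta_{k+1}=\mathbf{0}\mid\mathcal{F}_k)\ \ge\ (1-e^{-1/3})^{N}\,\mathbb{E}[\mathbf{1}_{P_1}\,\rho^{\,|\eta_{k+1/3}|}\mid\mathcal{F}_k]\ \ge\ (1-e^{-1/3})^{N}\rho^{M}\,\mathbb{P}(P_1\cap\{|\eta_{k+1/3}|\le M\}\mid\mathcal{F}_k),
\]
and the last factor equals $\mathbb{P}(P_1\mid\mathcal{F}_k)\cdot\mathbb{P}(|\eta_{k+1/3}|\le M\mid\mathcal{F}_k,P_1)\ge\tfrac12(1-e^{-1/3})^{N}$, so $c_0:=\tfrac12(1-e^{-1/3})^{2N}\rho^{M}>0$ does the job. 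What I am glossing over is only routine: the bookkeeping via the graphical representation (independent Poisson clocks for the collapses and for the Yule births, independent survival/dispersal coins) behind the ``age $\le\tfrac13$'' claim of Phase 1 and the independence/factorization claim of Phase 2, together with the standard non--explosion of Yule processes started from finitely many individuals (so that the process is well defined with $|\eta_t|<\infty$ for all $t$). The one genuinely essential point, as flagged above, is that one must not try to empty the graph in a single sweep from an arbitrary state: Phase 1 first makes the total population size tight uniformly in $\mathcal{F}_k$, after which Phase 2 succeeds with probability bounded below.
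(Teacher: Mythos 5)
Your proof is correct, but it takes a genuinely different route from the paper's. The paper isolates the single number $\alpha>0$, the probability that a collapsing colony (started from one individual) leaves no survivors, and then dominates the colony-count process by a finite-state branching-like Markov chain in which each "individual" dies with probability $\alpha$ or is replaced by $m$ offspring, truncated at $n=|V|$; extinction then follows from the soft fact that a finite Markov chain with an accessible absorbing state is eventually absorbed. You instead prove a quantitative, uniform-in-configuration one-step estimate $\mathbb{P}(\eta_{k+1}=\mathbf{0}\mid\mathcal{F}_k)\ge c_0>0$ and iterate. The key obstruction you correctly identify — that the state space is effectively non-compact because colony sizes are unbounded, so a naive "everyone dies at the next collapse" bound is not uniform — is exactly what your Phase 1 regeneration step resolves: forcing every clock to ring in $[k,k+\tfrac13]$ makes every surviving colony young (each new colony is founded by a single individual, so initial colony sizes are irrelevant after the first sweep), which makes $|\eta_{k+1/3}|$ tight uniformly in $\mathcal{F}_k$, after which the clean-sweep bound $\rho^{|\xi|}$ with $\rho=\mathbb{E}[(1-p)^{Y_1(1/3)}]$ is applied only to configurations of bounded total size. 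Your computation of $\mathbb{E}[(1-p)^{Y_1(u)}]$ and its monotonicity are right, the $G_z$ really do depend on disjoint clocks/coins, and the identification of global survival with $\{\eta_j\neq\mathbf{0}\ \forall j\in\mathbb{N}\}$ is valid because $\mathbf{0}$ is absorbing. What each approach buys: the paper's argument is shorter and softer (no explicit constants, and it quietly relies on the same single-individual-founding fact through the definition of $\alpha$), while yours is self-contained, handles arbitrary initial colony sizes explicitly, and yields an exponential tail $(1-c_0)^m$ for the extinction time, at the cost of the graphical-representation bookkeeping you acknowledge deferring.
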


\begin{proof}

Let $\alpha$ be the probability at a colony collapse time, zero
individuals attempts to found new colonies at neighboring vertices.

From the fact that the probability that a Yule process, starting from one individual, has $j$ individuals at time $t$ is
$e^{-\lambda t}(1-e^{-\lambda t})^{j-1},$ we have that 
\[ \alpha=\int_0^\infty e^{-t}\sum_{j= 1}^\infty e^{-\lambda t}(1-e^{-\lambda t})^{j-1}(1-p)^{j}dt. \]

Let $n$ and $m$ be the number of vertices of $\mathcal{G}$ and its maximum degree, respectively. 
In order to
show sufficient conditions for extinction we couple the number of colonies in the original 
model to $X_t$, the following continuous time branching process. 
Each individual is independently associated to an exponential random variable of rate 1 in such a way that, when its exponential time occurs, it dies with probability $\alpha$ or
is replaced by $m$ individuals with probability $1-\alpha$. We also consider a restriction
that makes the total number of individuals always smaller or equal than $n$ by suppressing 
the births that would make $X_t$ larger than $n$.

Let $C_t$ be the number of colonies at time $t$ in the colonization and collapse process.
At time $t=0$ let $X_0=C_0$. Moreover, we couple each individual in $X_0$ to a colony in
the colonization and collapse process by using the same exponential random variable of rate 1.
When an exponential occurs there are two possibilities. With probability $\alpha$ both
$X_t$ and $C_t$  decrease by 1. With probability $1-\alpha$ the process $X_t$ grows by $m-1$ individuals and $C_t$ grows by at most $m-1$ colonies. This is so because in the colonization 
process we have spatial constraints and attempted colonizations only occur at vertices that are empty.
Hence, new colonies correspond to births for $X_t$. We couple each new colony to a new
individual in $X_t$ by using the same mean 1 exponential random variable. This coupling yields
for all $t \geq 0 $
\[ X_t \geq C_t. \]
Note now that $\alpha>0$ and that $X_t$ is a finite Markov process with 
an absorbing state. Hence, $X_t$ dies out with probability 1. So does $C_t$ and therefore the 
\ccp.
\end{proof}

\section{Main Results}

Next, we show sufficient conditions for global extinction and local extinction for the \ccp\ 
on infinite graphs.

\begin{theorem}\label{extinction} Let $\mathcal{G}$ be a $m$-regular graph, $\eta_t$ a $CC(\mathcal{G},\lambda,p)$ and $$\mu(m) = m-\frac{m}{\lambda}\sum_{k\geq 1} B\left(1+\frac{1}{\lambda},k\right)\left(1-\frac{p}{m}\right)^k,$$ where $B(a,b)=\int_0^1 u^{a-1}(1-u)^{b-1}du$ is the beta function.  
\begin{itemize}
\item[$(i)$] If $\mu(m)\leq1$ and $|\eta_0| < \infty$, then $\eta_t$ dies out locally and globally.
\item[$(ii)$] Let $\mathcal{G}=\mathbb{Z}^d, \ d \ge 1.$ If $\mu(2d)<1$ and $\eta_0(x)\leq 1$ for every $x$ in  $\mathbb{Z}^d$ then $\eta_t$ dies locally.  
\item[$(iii)$] Let $\mathcal{G}=\mathbb{T}^d, \ d \ge 1.$  If $\mu(d+1)<1/d$ and $\eta_0(x)\leq 1$ for every $x$ in  $\mathbb{T}^d$ then $\eta_t$ dies locally.  
\end{itemize}
\end{theorem}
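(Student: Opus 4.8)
The whole argument runs on one observation about $\mu(m)$. Substituting $u=e^{-\lambda t}$ in the beta integral one checks that
\[
\frac{1}{\lambda}\sum_{k\ge 1}B\!\left(1+\tfrac1\lambda,k\right)\!\left(1-\tfrac pm\right)^{k}
=\int_0^\infty e^{-t}\sum_{j\ge1}e^{-\lambda t}(1-e^{-\lambda t})^{j-1}\!\left(1-\tfrac pm\right)^{j}dt
=\mathbb{E}\!\left[\left(1-\tfrac pm\right)^{J}\right],
\]
where $J$ is the number of individuals in a rate-$\lambda$ Yule process sampled at an independent rate-$1$ exponential time, i.e.\ the size of a colony at the moment it collapses. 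Hence $\mu(m)=m\bigl(1-\mathbb{E}[(1-p/m)^{J}]\bigr)$, and since, given $J$, the chance that \emph{no} survivor of a collapsing colony picks a prescribed neighbour equals $(1-p/m)^{J}$, we read off the two facts the proof uses: on an $m$-regular graph a collapsing colony founds a colony at a prescribed \emph{empty} neighbour with probability exactly $\mu(m)/m$, while the expected number of neighbours it colonizes, occupancy disregarded, is $\mu(m)$.

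For part $(i)$ I would couple the colony process to a continuous-time branching process $X_t$ started from the (finitely many) initial colonies: each particle carries a rate-$1$ clock and, when it rings, is replaced by $R$ particles, where $R$ has the law of the number of distinct neighbours picked by the survivors of a ``free'' collapsing colony, so $\mathbb{E}[R]=\mu(m)$. Driving both processes by the same clocks, Yule processes and survival/target variables, one maintains an inclusion of the colonies into the particles: at a collapse the colony founds some $\nu\le R$ new colonies (an occupied target simply produces nothing), which are matched to $\nu$ of the $R$ offspring, the remaining $R-\nu$ becoming ghosts. Thus the number of colonies $C_t$ is dominated by $X_t$ for every $t$. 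Since $\mathbb{E}[R]=\mu(m)\le1$ and $R$ is non-degenerate (it equals $0$ with the positive probability $\alpha$ of Proposition~\ref{P: ExtinctionFiniteVolume}, and is $\ge1$ with probability $1-\alpha>0$), the (sub)critical branching process $X_t$ is extinct almost surely; hence $C_t$, and with it $|\eta_t|$, vanishes eventually, which is global death, and global death forces local death.

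For parts $(ii)$ and $(iii)$, where $\eta_0$ may be infinite and only local death is asserted, I would argue by a first-moment bound along the genealogy of colonies. Make the colonies into a forest: the roots are the colonies present at time $0$, and a colony is the child of the colony whose collapse founded it. Every colony has a finite depth, since along an ancestral line the successive colony lifetimes are i.i.d.\ rate-$1$ exponentials whose partial sums diverge, so only finitely many ancestors fit below a colony's founding time. Fix a vertex $x$ and let $Z_k(x)$ be the number of depth-$k$ colonies at $x$. Conditioning on the history up to — but excluding the colonization attempts of — the depth-$(k-1)$ colonies, and using that the later Yule growth, lifetime, survival coins and target choices of such a colony are fresh randomness while occupancy can only suppress colonizations, a depth-$(k-1)$ colony sitting at a neighbour $y$ of $x$ founds a colony at $x$ with conditional probability at most $\mu(m)/m$; hence
\[
\mathbb{E}[Z_k(x)]\ \le\ \frac{\mu(m)}{m}\sum_{y\sim x}\mathbb{E}[Z_{k-1}(y)],
\qquad \mathbb{E}[Z_0(x)]=\eta_0(x)\le1 .
\]
Iterating, $\mathbb{E}[Z_k(x)]$ is at most $(\mu(m)/m)^{k}$ times the number of length-$k$ walks in $\mathcal{G}$ ending at $x$. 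For $\mathcal{G}=\mathbb{Z}^d$ ($m=2d$) there are at most $(2d)^k$ such walks, so $\mathbb{E}[Z_k(x)]\le\mu(2d)^{k}$ and $\sum_k\mathbb{E}[Z_k(x)]<\infty$ whenever $\mu(2d)<1$; for $\mathcal{G}=\mathbb{T}^d$ ($m=d+1$) the corresponding count of ancestral walks on the tree makes $\sum_k\mathbb{E}[Z_k(x)]$ finite under the stated hypothesis $\mu(d+1)<1/d$. In either case $\sum_k Z_k(x)<\infty$ almost surely, so only finitely many colonies are ever founded at $x$; once the last of them collapses, $\eta_t(x)\equiv0$, which is local death.

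The beta-integral identity and the walk counts are routine. The step I expect to cost the most work is justifying the first-moment recursion rigorously: the occupancy rule couples distinct colonies, so one must choose the conditioning $\sigma$-algebra so that the set of depth-$(k-1)$ colonies and their locations is measurable while the colonization attempts \emph{made by} those colonies are still governed by independent randomness dominated by the free $\mu(m)/m$ bound — this, together with the almost-sure finiteness of all depths, is the delicate part. For part $(i)$ the only subtlety is that one really needs the almost-sure extinction of the \emph{critical} branching process, which is why the domination must be by a genuine branching process and not merely by a first-moment estimate.
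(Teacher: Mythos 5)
Your proof is correct, and while part $(i)$ follows essentially the paper's route, parts $(ii)$--$(iii)$ take a genuinely different one. For $(i)$ the paper does the same thing you do: it introduces an auxiliary process $\xi_t$ that dominates $\eta_t$ (at most one new colony per neighbour at a collapse), embeds a Galton--Watson process $Z_n$ with offspring mean $\mu(m)$ computed exactly as in your beta-integral identity, and concludes extinction from $\mu(m)\le 1$; your explicit remark that the offspring law is non-degenerate (it is $0$ with probability $\alpha>0$) is the right way to handle the critical case, which the paper passes over silently. For $(ii)$--$(iii)$ the paper argues differently: it fixes $x$, and for each initial vertex $y$ bounds the probability that the lineage started at $y$ ever reaches $x$ by $\mathbb{P}(Z_{d(x,y)}(y)\ge 1)\le \mu^{d(x,y)}$ (the lineage must survive at least $d(x,y)$ generations), sums over $y$, and applies Borel--Cantelli. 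That bound counts the \emph{entire} generation-$d(x,y)$ population rather than only the part sitting at $x$, which is why the paper must pay for the exponential volume growth of the tree and ends up needing $\mu(d+1)<1/d$. Your spatial first-moment recursion $\mathbb{E}[Z_k(x)]\le (\mu(m)/m)\sum_{y\sim x}\mathbb{E}[Z_{k-1}(y)]$ tracks the branching random walk's location and gives $\mathbb{E}[Z_k(x)]\le\mu(m)^k$ on any $m$-regular graph (there are exactly $m^k$ length-$k$ walks ending at $x$); this proves local death under $\mu(m)<1$ uniformly, hence yields the tree case under the weaker hypothesis $\mu(d+1)<1$ --- strictly stronger than statement $(iii)$. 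The price is the measurability bookkeeping you flag: one must condition on the filtration at each colony's founding time (a stopping time), use memorylessness of the site clock and the freshness of the Yule/coin/target randomness to get the $\mu(m)/m$ bound, and check that every colony has finite depth so that $\sum_k Z_k(x)$ really counts all colonies ever at $x$. These points are all sound as you describe them, and the paper's own coupling with $\xi_t$ could be reused to make them precise.
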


\begin{obs} Observe that for all $m\geq 1$ and $\lambda$ fixed, there exists $p>0$ such that $\mu(m)\leq1$. Furthermore, $\mu(m)$ can be expressed in terms of the Gauss hypergeometric function ${_2F_1}$ (see
Luke~\cite{Luke}), 

$$\mu(m)=m-\frac{m-p}{\lambda+1}{_2F_1}\left(1,1;2+\frac{1}{\lambda};1-\frac{p}{m}\right).$$    
\end{obs}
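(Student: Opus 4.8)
The statement has two parts — a closed form for $\mu(m)$ and the existence of a small $p$ making $\mu(m)\leq 1$ — and both reduce to elementary manipulation of the series $\sum_{k\geq 1}B\left(1+\tfrac1\lambda,k\right)x^k$ with $x:=1-\tfrac pm$. Throughout I take $\lambda>0$ (for $\lambda=0$ the formula is vacuous, a colony then holding a single individual at its collapse time, so that $\mu(m)=p<1$ outright); note also that $x\in(0,1)$ for every $m\geq 1$ and $p\in(0,1)$, since $0<p/m\leq p<1$.

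For the hypergeometric identity, I would write $B\left(1+\tfrac1\lambda,k\right)=\Gamma\left(1+\tfrac1\lambda\right)\Gamma(k)/\Gamma\left(k+1+\tfrac1\lambda\right)$, shift the index to $n=k-1$, and compare with the defining power series $\sum_{n\geq0}\frac{(1)_n(1)_n}{(c)_n}\frac{z^n}{n!}$ of $_2F_1$ with $c=2+\tfrac1\lambda$, using $(1)_n=n!$ and $(c)_n=\Gamma(n+c)/\Gamma(c)$:
\[ \sum_{k\geq1}B\left(1+\tfrac1\lambda,k\right)x^k \;=\; x\sum_{n\geq0}\frac{\Gamma\left(1+\tfrac1\lambda\right)\,n!}{\Gamma\left(n+2+\tfrac1\lambda\right)}\,x^n \;=\; \frac{\Gamma\left(1+\tfrac1\lambda\right)}{\Gamma\left(2+\tfrac1\lambda\right)}\;x\;{_2F_1}\!\left(1,1;2+\tfrac1\lambda;x\right). \]
The term-by-term identifications are legitimate because all summands are nonnegative and each series has radius of convergence $1$, while $x\in(0,1)$. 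Since $\Gamma\left(1+\tfrac1\lambda\right)/\Gamma\left(2+\tfrac1\lambda\right)=\lambda/(\lambda+1)$ and $mx=m-p$, multiplying by $\tfrac m\lambda$ turns the definition of $\mu(m)$ into $\mu(m)=m-\tfrac{m-p}{\lambda+1}\,{_2F_1}\!\left(1,1;2+\tfrac1\lambda;1-\tfrac pm\right)$, as claimed. Equivalently, one may sum the geometric series inside $B\left(1+\tfrac1\lambda,k\right)=\int_0^1 u^{1/\lambda}(1-u)^{k-1}\,du$ and recognize Euler's integral representation of $_2F_1$, whose hypotheses $2+\tfrac1\lambda>1>0$ and $x<1$ hold here.

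For the existence of $p$, I would let $p\downarrow 0$, that is $x\uparrow 1$. Each term $B\left(1+\tfrac1\lambda,k\right)x^k$ increases to $B\left(1+\tfrac1\lambda,k\right)$, so by monotone convergence $\sum_{k\geq1}B\left(1+\tfrac1\lambda,k\right)x^k\uparrow\sum_{k\geq1}B\left(1+\tfrac1\lambda,k\right)$, and this last sum equals $\int_0^1 u^{1/\lambda}\sum_{k\geq1}(1-u)^{k-1}\,du=\int_0^1 u^{1/\lambda-1}\,du=\lambda$. Hence $\tfrac m\lambda\sum_{k\geq1}B\left(1+\tfrac1\lambda,k\right)x^k\to m$, so $\mu(m)\to 0$ as $p\downarrow 0$. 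From the closed form this is Gauss's summation theorem ${_2F_1}\!\left(1,1;2+\tfrac1\lambda;1\right)=\lambda+1$ — valid since $c-a-b=\tfrac1\lambda>0$ — together with continuity at $z=1$ via Abel's theorem. Applying the definition of the limit with $\epsilon=1$ then produces $\delta>0$ with $\mu(m)<1$ for all $p\in(0,\delta)$, and any such $p$ proves the claim.

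The argument is routine; the only fussy point is the Gamma/Pochhammer bookkeeping needed to match the series against the standard normalization of $_2F_1$ (and, on the integral route, checking $\Re c>\Re b>0$ and $|z|<1$ in Euler's formula) — both straightforward here because $2+\tfrac1\lambda>1>0$ and $1-\tfrac pm\in(0,1)$.
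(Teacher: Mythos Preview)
Your argument is correct on both counts: the Pochhammer/Gamma bookkeeping identifying $\sum_{k\geq1}B(1+\tfrac1\lambda,k)x^k$ with $\tfrac{\lambda}{\lambda+1}\,x\,{_2F_1}(1,1;2+\tfrac1\lambda;x)$ is clean and accurate, and the monotone-convergence computation $\sum_{k\geq1}B(1+\tfrac1\lambda,k)=\lambda$ correctly gives $\mu(m)\to 0$ as $p\downarrow 0$, from which the existence claim follows.

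There is nothing to compare against: in the paper this statement is a Remark with no accompanying proof---the authors simply assert the hypergeometric form (citing Luke) and the existence of small $p$. Your write-up therefore supplies exactly the verification the paper omits, and does so by the natural route (series identification for the closed form, monotone convergence or equivalently Gauss's summation for the limit).
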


Next, we show sufficient conditions for survival for \ccp\ 
on some infinite graphs.

\begin{theorem}
\label{T: survival} 
For $p>0$ and $\lambda:=\lambda(p,\mathcal{G})>0$ large enough, the $CC(\mathcal{G},\lambda,p)$ with $\mathcal{G}=\mathbb{Z}^d$ or $\mathbb{T}^d$ survives globally and locally.
\end{theorem}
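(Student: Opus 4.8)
The plan is to prove survival by a block/renormalization argument that compares the $CC(\mathcal{G},\lambda,p)$ process to a supercritical oriented percolation on $\mathbb{Z}^d$ (resp. on $\mathbb{T}^d$), in the spirit of the standard contact-process comparison found in Liggett~\cite{Liggett}. The idea is that, by taking $\lambda$ large, a single colony survives long enough (and grows large enough) so that, with probability close to $1$, by the time it collapses it has colonized a prescribed finite set of neighboring vertices. One then reads off a dependent percolation process on a space-time lattice of macroscopic blocks whose open-site density tends to $1$ as $\lambda\to\infty$, and invokes the comparison theorem of Liggett~\cite[Theorem~B.26]{Liggett} (or Durrett's block-condition argument) to conclude that percolation is supercritical, giving both global survival and, via an infinite open cluster through a fixed block, local survival.

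First I would make the ``good colony'' estimate precise. Fix a reference vertex $x$ and let it be occupied by a single founder at time $0$. The colony size is a Yule process with rate $\lambda$; conditioned on the collapse clock (rate $1$), at collapse time the colony has $J$ individuals, where $\mathbb{P}(J=j)=\int_0^\infty e^{-t}e^{-\lambda t}(1-e^{-\lambda t})^{j-1}\,dt$, so $J$ is of order $\lambda$. Each of the $J$ survivors independently stays alive w.p.\ $p$ and picks a uniform neighbor. I would show that, for any fixed target vertex $y$ adjacent to $x$ and any $\epsilon>0$, there is $\lambda_0$ such that for $\lambda\ge\lambda_0$ the probability that at collapse time $y$ is empty and receives at least one surviving colonizer is at least $1-\epsilon$; more generally, for any fixed finite set $A$ of vertices reachable from $x$ within a bounded number of generations of colonies, the probability that every vertex of $A$ gets (re)colonized within a bounded time window is at least $1-\epsilon$. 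The point is that with $J\sim c\lambda$ survivors, the expected number landing on a given neighbor is $\sim p J/m\to\infty$, so a Poisson-type lower bound forces occupation with probability $\to1$; the ``empty'' requirement is handled by choosing the block geometry so that the relevant target sites were vacated by their own independent collapse clocks with probability bounded away from $0$, or by working on the event that the comparison has not yet populated them.

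Next I would set up the renormalization. Partition space-time into blocks indexed by $(z,n)\in\mathbb{Z}^d\times\mathbb{Z}_{\ge0}$ (for $\mathbb{T}^d$, index blocks by vertices of a coarsened tree), each block of spatial diameter $L$ and temporal length $T$, with $L,T$ chosen large but fixed. Declare a block $(z,n)$ \emph{open} if, starting from a single colony in a designated ``seed'' vertex of block $(z,n)$ at the start of its time interval, the $CC$ process produces, by the end of the interval, a single colony at the seed vertex of each forward-neighboring block $(z',n+1)$. By the good-colony estimate and a union bound over the finitely many steps inside one block, the conditional probability that $(z,n)$ is open, given the states of earlier blocks, is at least $1-\delta(\lambda)$ with $\delta(\lambda)\to0$ as $\lambda\to\infty$. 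This is exactly the setting of the comparison theorem: for $\delta$ small enough the induced process dominates a supercritical oriented percolation, which percolates; on $\{$percolation survives$\}$ the $CC$ process has $|\eta_t|\ge1$ for all $t$ (global survival), and since the open cluster revisits the neighborhood of the origin infinitely often (on $\mathbb{Z}^d$) or the block-tree has an infinite open subtree forcing infinitely many recolonizations of a fixed vertex (on $\mathbb{T}^d$), we also get local survival. Monotonicity in $\lambda$ (noted in the excerpt) then promotes ``$\lambda$ large enough'' to ``all $\lambda\ge\lambda(p,\mathcal{G})$.''

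\textbf{Main obstacle.} The genuinely nonstandard difficulty, flagged in the paper's introduction, is the \emph{unboundedness} of colony sizes and the all-at-once collapse: the comparison random variable $X_t$ used in Proposition~\ref{P: ExtinctionFiniteVolume} goes the wrong way here, and one cannot simply dominate from below by a contact process because surviving colonizers landing on an already-occupied vertex are \emph{wasted}. So the crux is controlling the ``empty target'' requirement in the good-colony estimate: I must choose the block geometry (the seed vertices, the ordering of colonization steps within a block, and the time window $T$ relative to the collapse rate $1$) so that each target site is, with probability bounded below, vacant at the moment it is to be colonized — e.g.\ by routing colonization through sites that have just collapsed, or by enlarging $T$ so that each target's own rate-$1$ collapse clock has fired recently. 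Making this self-consistent across all blocks simultaneously, so that the dependence structure still fits the hypotheses of the percolation comparison theorem, is where the real work lies; everything else (the Yule/collapse integral, the union bounds, the final appeal to supercritical oriented percolation) is routine.
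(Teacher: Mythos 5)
Your proposal correctly identifies the overall strategy of the paper's proof (a Bramson--Durrett block construction compared to supercritical $1$-dependent oriented percolation, with the block estimate driven by Lemma~\ref{L: lemaaux1}$.(ii)$, i.e.\ that for $\lambda$ large a collapsing colony sends at least one surviving colonizer to every neighbor with probability near $1$). However, you explicitly flag the central difficulty --- guaranteeing that target sites are \emph{empty} when colonizers arrive, since colonizers landing on occupied sites are wasted --- and then leave it unresolved, offering only tentative devices (``enlarging $T$ so that each target's collapse clock has fired recently,'' ``routing colonization through sites that have just collapsed'') that do not by themselves work: a clock firing at an empty site does nothing, and a vacated site may be reoccupied before your designated colonizer arrives. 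This is a genuine gap, because that difficulty is precisely what the paper's key idea is designed to eliminate.

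The paper's resolution is to first reduce to $d=1$ (survival on a copy of $\mathbb{Z}$ embedded in $\mathbb{Z}^d$ or $\mathbb{T}^d$ suffices, by attractiveness) and to track not a single seed colony but a \emph{half-full} interval: a configuration in which the gap between consecutive occupied sites is at most one. On the event $E$ that every collapse in the space-time box is followed by at least one attempted colonization on each side (probability $\to 1$ by Lemma~\ref{L: lemaaux1}$.(ii)$ and a bound on the number of collapses), half-fullness is self-sustaining --- a collapsing site vacates itself but fills any empty neighbor, so single-site gaps never grow --- and, crucially, the rightmost occupied site $r_t$ has an \emph{automatically empty} right neighbor, so each collapse at $r_t$ pushes the front right by exactly one. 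Hence $r_t$ advances at rate $1$, and choosing $T=\tfrac52 L$ makes $I_{\pm1}$ half-full at time $T$ with probability near $1$. Your proposed block event (propagating a single colony at a designated seed vertex to seed vertices of neighboring blocks) does not come with any such structural guarantee of emptiness, which is why your own assessment that ``the real work lies'' there is accurate: without the half-full invariant (or an equivalent device), the block estimate is not established and the comparison theorem cannot be invoked.
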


\begin{obs} From Theorems \ref{extinction} and \ref{T: survival}  it follows that
for $\mathcal{G}=\mathbb{Z}^d$ or $\mathbb{T}^d$ and $p\in (0,1)$, there exists phase
transition (on $\lambda$) for $CC(\mathcal{G},\lambda,p).$ So, there exists a function
$\lambda_c(\cdot,\mathcal{G}):(0,1)\rightarrow \mathbb{R}^+$ such that the survival and extinction regime
for $CC(\mathcal{G},\lambda,p)$ can be represented as in Figure~\ref{F: TransicaoAnt}.
\end{obs}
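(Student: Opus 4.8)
The plan is to derive the phase transition statement $0<\lambda_c(p,\mathcal{G})<\infty$ for every $p\in(0,1)$ and $\mathcal{G}\in\{\mathbb{Z}^d,\mathbb{T}^d\}$ directly from Theorems~\ref{extinction} and~\ref{T: survival}; the picture in Figure~\ref{F: TransicaoAnt} then follows from this together with the monotonicities already recorded above (the map $\lambda\mapsto\mathbb{P}^{\delta_x}(|\eta_t|\geq1\text{ for all }t\geq0)$ is non-decreasing, $\lambda_c(\cdot,\mathcal{G})$ is non-increasing in $p$, and $\lambda_c(0,\mathcal{G})=\infty$, $\lambda_c(1,\mathcal{G})=0$). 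The finiteness $\lambda_c(p,\mathcal{G})<\infty$ is immediate: for each $p>0$, Theorem~\ref{T: survival} provides a value $\lambda_0=\lambda(p,\mathcal{G})$ with $\mathbb{P}^{\delta_x}(|\eta_t|\geq1\text{ for all }t\geq0)>0$, hence $\lambda_c(p,\mathcal{G})\leq\lambda_0<\infty$.

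For the positivity $\lambda_c(p,\mathcal{G})>0$ I would invoke Theorem~\ref{extinction}$(i)$ with $m=2d$ when $\mathcal{G}=\mathbb{Z}^d$ and $m=d+1$ when $\mathcal{G}=\mathbb{T}^d$ (both graphs are $m$-regular): it suffices to exhibit one value $\lambda>0$ for which $\mu(m)\leq1$, since then the process started from $|\eta_0|=1$ dies out globally, forcing $\lambda_c(p,\mathcal{G})\geq\lambda$. The key computation is the small-$\lambda$ limit $\lim_{\lambda\to0^+}\mu(m)=p$. I would read this off the hypergeometric form $\mu(m)=m-\frac{m-p}{\lambda+1}\,{_2F_1}(1,1;2+\frac1\lambda;1-\frac pm)$ recorded after Theorem~\ref{extinction}: the prefactor satisfies $\frac{m-p}{\lambda+1}\to m-p$, while ${_2F_1}(1,1;c;z)\to1$ as $c=2+\frac1\lambda\to\infty$ with $z=1-\frac pm\in(0,1)$ fixed, so $\mu(m)\to m-(m-p)=p<1$. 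Since this limit is $<1$, there is $\lambda_1=\lambda_1(p,m)>0$ such that $\mu(m)<1$ for all $\lambda\in(0,\lambda_1)$, and Theorem~\ref{extinction}$(i)$ then yields global (and local) death for every such $\lambda$, hence $\lambda_c(p,\mathcal{G})\geq\lambda_1>0$. Combining the two bounds gives $0<\lambda_c(p,\mathcal{G})<\infty$, i.e.\ the asserted phase transition, and consequently $p\mapsto\lambda_c(p,\mathcal{G})$ is a well-defined non-increasing function $(0,1)\to\mathbb{R}^+$ whose graph separates the survival regime $\{\lambda>\lambda_c(p,\mathcal{G})\}$ from the extinction regime $\{\lambda<\lambda_c(p,\mathcal{G})\}$, as depicted in Figure~\ref{F: TransicaoAnt}.

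The only step in this argument that is not a direct quotation of Theorems~\ref{extinction} and~\ref{T: survival} (or of the monotonicity couplings already established) is the asymptotics $\mu(m)\to p$ as $\lambda\to0^+$, which is where I expect the mild technical work to be: one must justify ${_2F_1}(1,1;c;z)\to1$ as $c\to\infty$, equivalently interchanging $\lambda\to0^+$ with the beta series $\frac1\lambda\sum_{k\geq1}B(1+\frac1\lambda,k)(1-\frac pm)^k$ in the original expression for $\mu(m)$. I would handle this via Euler's integral representation ${_2F_1}(1,1;c;z)=(c-1)\int_0^1(1-t)^{c-2}(1-zt)^{-1}\,dt$: the $(c-1)(1-t)^{c-2}\,dt$ are probability measures on $[0,1]$ concentrating at $t=0$, while the factor $(1-zt)^{-1}$ is continuous with value $1$ at $t=0$ and bounded by $(1-z)^{-1}$, so splitting the integral at a small $t=\delta$, letting $c\to\infty$ and then $\delta\to0$ gives the limit $1$. (Alternatively one can estimate $B(1+\frac1\lambda,k)=\Gamma(1+\frac1\lambda)\Gamma(k)/\Gamma(1+\frac1\lambda+k)\sim\Gamma(k)\lambda^{k}$ term by term and see directly that the factor $\frac1\lambda$ is absorbed, but the hypergeometric route is shorter.)
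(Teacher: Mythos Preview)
Your argument is correct and is essentially what the paper intends: the remark is stated without proof, simply asserting that it follows from Theorems~\ref{extinction} and~\ref{T: survival}, so your write-up supplies exactly the details the paper leaves implicit. In particular, the one step that is not a direct citation of those theorems---namely that for each fixed $p\in(0,1)$ one can choose $\lambda>0$ small enough that $\mu(m)\leq 1$, via $\lim_{\lambda\to 0^+}\mu(m)=p$---is the natural missing computation, and your justification through the hypergeometric representation (or the equivalent probabilistic reading: as $\lambda\to 0$ the colony has a single individual at collapse, so the offspring mean is $p$) is sound.
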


\begin{figure}[ht]
\includegraphics[width=6cm]{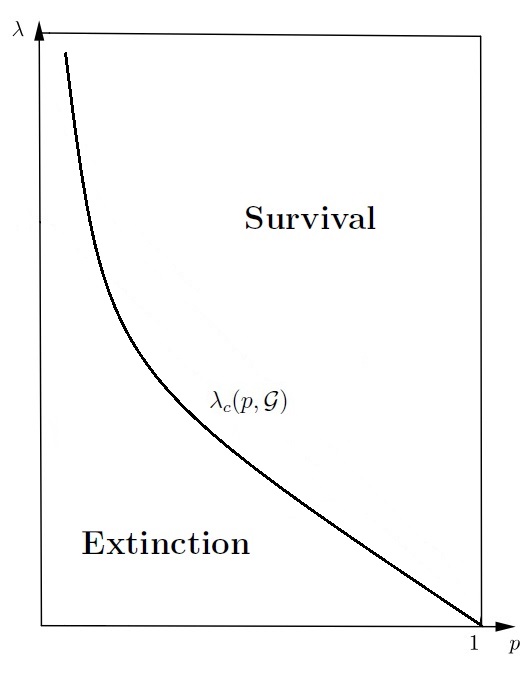}
\caption{Phase transition to $CC(\mathcal{G},\lambda,p),$ with $\mathcal{G}=\mathbb{Z}^d \ (\text{or} \ \mathbb{T}^d).$ }
\label{F: TransicaoAnt}
\end{figure}

\section{Non spatial models}

So called Catastrophe Models have been studied extensively and are quite close to our model, see Kapodistria et al.~\cite{Kapodistria} for references on the subject. Particularly relevant is the birth and death process with binomial catastrophes, see Example 2 in Brockwell~\cite{Brockwell}. We now describe this model. It is a  single colony model. Each individual gives birth at rate $\lambda>0$ and dies at rate  $\mu>0$. Moreover,  catastrophes (i.e. collapses) happen at rate $a>0$. When a catastrophe happens every individual in the colony has a probability $p$ of surviving and $1-p$ of dying, independently of each other. Brockwell~\cite{Brockwell} has shown that survival  (i.e. at all times there is at least one individual in the colony)
has positive probability if and only if $\lambda>\mu$ and
$$\mu-a\log p<\lambda.$$
Hence, there is a critical value for $p$,
$$p_1=\exp(-\frac{\lambda-\mu}{a}).$$
The single colony model survives if and only if $p>p_1$. \\

Next we introduce a non spatial version of our model and compare it to the catastrophe model above. Consider a model for which every individual gives
birth at rate $\lambda$ and dies at rate $\mu$. We start with a single individual and hence with a single colony. 
When a colony collapses  individuals in the colony survive with probability $p$ and die with probability $1-p$ independently of each other. 
Every surviving individual founds a new colony which eventually collapses.  Colonies collapse independently of each other at rate $a>0$. 
The proof in Schinazi~\cite{S2014} may be adapted to show that survival has positive probability if and only if
$$pE\left [\exp\left ((\lambda-\mu)T\right )\right ]>1,$$
where $T$ has a rate $a$ exponential distribution. It is easy to see that if $\lambda\geq \mu+a$ then the expected value on the l.h.s. is $+\infty$ and the inequality
holds for any $p>0$. It is also easy to see that the inequality cannot hold if $\lambda\leq \mu$. Hence, from now on we assume that $\mu<\lambda<\mu+a.$
After computing the expected value and solving for $p$ we get that survival is possible if and only if 
$$p>1-\frac{\lambda-\mu}{a}.$$
That is, when $\mu<\lambda<\mu+a$ the  model with multiple colonies has a critical value
$$p_2=1-\frac{\lambda-\mu}{a}.$$
The multiple colonies model survives if and only if $p>p_2$.

Since $\exp(-x)> 1-x$ for all $x\not = 0$ we have that
$p_1>p_2$ for any $\lambda>\mu$. Hence, it is easier for the model with multiple colonies to survive than it is for 
the model with a single colony. That is,  living in multiple smaller colonies is a better survival strategy than living in a single big colony. 
Note that this conclusion was not obvious. The one colony model has a 
catastrophe rate of $a$ while the multiple colonies model has a catastrophe rate of $na$ if
there are $n$ colonies. Moreover, a catastrophe is more likely to wipe out a smaller colony than a larger one. On the other hand 
multiple colonies give multiple chances for survival and this turns out to be a critical advantage of the multiple colonies model
over the single colony model.

\section{proofs}

\subsection{Auxiliary results}

For $\eta_t$ being a $CC(\mathcal{G},\lambda,p)$ process, we know that some colonization attempts  will not succeed because the vertex on which the attempted colonization takes place is already occupied. 
This creates dependence between the number of new colonies created upon the collapse of different colonies. Because of this lack of independence, explicit probability computation seems impossible. In order to prove Theorem \ref{extinction}, we introduce a branching-like process
which dominates  $\eta_t$, in a certain sense, and for which explicit computations are possible. This process is denoted by $\xi_t$ and defined as follows.\\ 

\noindent
\textbf{Auxiliary process $\xi_t$ :}\\
Each vertex of $\mathcal{G}$ might be empty or occupied by a number of colonies. Each colony
starts from a single individual. The number of individuals in a colony at time $t$ is determined by a pure
birth process of rate $\lambda$. Each colony is associated to a mean 1 exponential random variable.
When the exponential clock rings for a colony it collapses and each individual, independently from everything
else, survives with probability $p>0$ or dies with probability $1-p$. Each individual who survives tries to create a new colony at one of the nearest neighbor vertices picked at random. 
At every neighboring vertex we allow at most one new colony to be created. 
Hence,  in the process $\xi_t$ when a colony placed at vertex $x$ collapses, it is replaced by 0,1,.., or degree$(x)$ new colonies, each new colony on a distinct neighboring site of $x$. \\

Observe that birth and collapse rates are the same for colonies in $\xi_t$ and $\eta_t$.  To each colony created in process $\eta_t$ corresponds a colony created in the process $\xi_t$. But not every colony created in the process $\xi_t$ has its correspondent in the process $\eta_t$. 
Techniques such as in Liggett \cite[Theorem 1.5 in chapter III]{Liggett} 
can be used to construct the processes $\xi_t$ and $\eta_t$  in the same probability space in such a
way that, if they start with the same initial configuration, if there is a colony of size $i$ on a vertex $x$ for
$\eta_t$ then there is at least one colony of size $i$ for $\xi_t$ on the same vertex $x$.

\begin{lem}
\label{L: lemaaux1} Let $\mathcal{G}$ be a $m-$regular graph and $W_m(\lambda,p)$ the number of new colonies created by individuals of a collapsing colony in the process $\xi_t$ on $\mathcal{G}$. Then 
\begin{itemize}
\item[$(i)$] $\mu(m):=\mathbb{E}[W_m(\lambda,p)] = m-\frac{m}{\lambda}\sum_{k\geq 1} B\left(1+\frac{1}{\lambda},k\right)\left(1-\frac{p}{m}\right)^k.$
\item[$(ii)$] $q_{\lambda}:=\mathbb{P}[W_m(\lambda,p)=m]\rightarrow 1$ as $\lambda \rightarrow \infty.$
\end{itemize}
\end{lem}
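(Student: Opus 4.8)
The plan is to analyse a single colony collapse in the auxiliary process $\xi_t$ explicitly. Write $s:=1-\tfrac{p}{m}$. Let $T\sim\mathrm{Exp}(1)$ be the collapse time of the colony and let $J$ be the number of individuals present at time $T$; by the Yule formula recalled in the proof of Proposition~\ref{P: ExtinctionFiniteVolume}, $\mathbb{P}(J=j\mid T=t)=e^{-\lambda t}(1-e^{-\lambda t})^{j-1}$, i.e.\ $J$ is geometric given $T$. Each of the $J$ individuals survives with probability $p$ and, if it survives, picks one of the $m$ neighbouring vertices uniformly; hence a \emph{fixed} neighbour is missed by a given individual with probability $1-p+p\tfrac{m-1}{m}=s$, independently over individuals. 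By construction of $\xi_t$, $W_m(\lambda,p)$ is exactly the number of \emph{distinct} neighbouring vertices that receive at least one surviving individual.

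For $(i)$ I would write $W_m(\lambda,p)=\sum_{i=1}^m\mathbf{1}_{A_i}$, where $A_i=\{\text{neighbour }i\text{ is colonised}\}$, so by symmetry $\mathbb{E}[W_m(\lambda,p)]=m\,\mathbb{P}(A_1)=m\bigl(1-\mathbb{E}[s^{J}]\bigr)$, using $\mathbb{P}(A_1^c\mid J=j)=s^{j}$. The computational core is then $\mathbb{E}[s^J]$: conditioning on $T=t$ and summing the geometric series gives $\mathbb{E}[s^J\mid T=t]=\dfrac{s e^{-\lambda t}}{1-s(1-e^{-\lambda t})}$; substituting $v=e^{-\lambda t}$ (so $e^{-t}\,dt$ becomes $\tfrac1\lambda v^{1/\lambda}\tfrac{dv}{v}$) yields $\mathbb{E}[s^J]=\tfrac{s}{\lambda}\int_0^1\dfrac{v^{1/\lambda}}{1-s(1-v)}\,dv$; expanding $\dfrac{1}{1-s(1-v)}=\sum_{k\ge0}s^k(1-v)^k$ and integrating term by term with $\int_0^1 v^{1/\lambda}(1-v)^k\,dv=B\!\left(1+\tfrac1\lambda,k+1\right)$, then re-indexing $k\mapsto k-1$, gives $\mathbb{E}[s^J]=\tfrac1\lambda\sum_{k\ge1}B\!\left(1+\tfrac1\lambda,k\right)s^k$. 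Substituting back $s=1-\tfrac pm$ produces the stated formula for $\mu(m)$. All interchanges of sum and integral are of nonnegative terms, so Tonelli justifies them.

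For $(ii)$ no new computation is needed. Since $W_m(\lambda,p)\le m$ with equality iff \emph{every} neighbour is colonised, a union bound gives $\mathbb{P}\bigl(W_m(\lambda,p)<m\bigr)\le\sum_{i=1}^m\mathbb{P}(A_i^c)=m-\mu(m)=\tfrac{m}{\lambda}\sum_{k\ge1}B\!\left(1+\tfrac1\lambda,k\right)\left(1-\tfrac pm\right)^k$. As $\lambda\to\infty$ the factor $B\!\left(1+\tfrac1\lambda,k\right)$ increases to $B(1,k)=\tfrac1k$ (the beta function is decreasing in its first argument), so by monotone convergence $\sum_{k\ge1}B\!\left(1+\tfrac1\lambda,k\right)\left(1-\tfrac pm\right)^k\to\sum_{k\ge1}\tfrac1k\left(1-\tfrac pm\right)^k=-\log\!\tfrac pm<\infty$. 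Hence $m-\mu(m)=O(1/\lambda)\to0$, and therefore $q_\lambda=\mathbb{P}\bigl(W_m(\lambda,p)=m\bigr)\to1$.

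The only real obstacle is the bookkeeping in part $(i)$: performing the change of variables and the geometric-series expansion so that the answer lands in precisely the claimed beta-function form, and getting the re-indexing right. Part $(ii)$ is then essentially a corollary of part $(i)$ together with the elementary identity $\sum_{k\ge1}x^k/k=-\log(1-x)$ and the monotonicity of $a\mapsto B(a,k)$.
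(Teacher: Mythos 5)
Your proof is correct and follows essentially the same route as the paper's: decompose $W_m$ into indicators over the $m$ neighbours, use symmetry and the geometric (Yule) law of the colony size at collapse to get the beta-function series, and prove $(ii)$ by a union bound with the domination $B\left(1+\tfrac1\lambda,k\right)\le B(1,k)=\tfrac1k$. The only cosmetic difference is that you integrate the conditional generating function $\mathbb{E}[s^J\mid T=t]$ first and expand afterwards, whereas the paper first records $\mathbb{P}[Y=k]=\tfrac1\lambda B\left(1+\tfrac1\lambda,k\right)$ and then sums over $k$; the two computations coincide after Tonelli.
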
 

\begin{obs}
\label{O: obs1} 
Observe that for the process $\eta_t$ the probability that upon a collapse at vertex $x$ each one of its $m$ neighbors gets at least one colonization attempt is equal to  $q_{\lambda}$.
\end{obs}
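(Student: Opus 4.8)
The plan is to show that the pattern of \emph{attempted} colonizations created by a single collapsing colony has exactly the same law in $\eta_t$ as in the auxiliary process $\xi_t$, and then to recognize that in $\xi_t$ this pattern covers all $m$ neighbors precisely on the event $\{W_m(\lambda,p)=m\}$.

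First I would record the collapse mechanism that $\eta_t$ and $\xi_t$ share. In both processes a colony founded at a vertex $x$ grows, until it collapses, as a rate-$\lambda$ Yule process started from one individual, and it collapses at an exponential time of rate $1$ measured from its founding (for $\eta_t$ this is the residual time to the next ring of the rate-$1$ clock attached to $x$, which is $\mathrm{Exp}(1)$ by lack of memory; for $\xi_t$ it is by definition). Since the within-colony births and the collapse clock are run independently of the environment, the number $J$ of individuals present at the collapse is independent of the past evolution and of the current configuration, with
\[
\mathbb{P}(J=j)=\int_0^\infty e^{-t}\,e^{-\lambda t}(1-e^{-\lambda t})^{j-1}\,dt,\qquad j\ge 1.
\]
Conditionally on $J=j$, each of these $j$ individuals survives the collapse independently with probability $p$ and, if it survives, picks one of the $m$ neighbors of $x$ uniformly at random, all of these choices being independent of one another and of everything else. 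Hence the random set $A$ of neighbors of $x$ chosen by at least one survivor has a distribution that is the same in $\eta_t$ and in $\xi_t$ and that does not depend on the configuration at the collapse time.

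Then I would match the two events. In $\eta_t$ a neighbor $y$ of $x$ receives a colonization attempt upon the collapse at $x$ exactly when $y\in A$: the attempt (picking $y$) is made whether or not $y$ happens to be occupied, only its success depends on occupancy. Thus ``each of the $m$ neighbors of $x$ receives at least one colonization attempt'' is the event $\{|A|=m\}$. In $\xi_t$, on the other hand, the collapsing colony at $x$ is replaced by exactly one new colony at each $y\in A$ and by none at the remaining neighbors, so by definition $W_m(\lambda,p)=|A|$. Combining this with the identity in distribution from the first step,
\[
\mathbb{P}\big(\text{all }m\text{ neighbors of }x\text{ get a colonization attempt in }\eta_t\big)=\mathbb{P}\big(|A|=m\big)=\mathbb{P}\big(W_m(\lambda,p)=m\big)=q_{\lambda},
\]
which is the asserted equality.

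I do not expect any real obstacle here. The only point that requires care is the independence invoked in the first step, namely that the number of individuals at a collapse, and therefore the attempted-colonization pattern $A$, is independent of the history of $\eta_t$; this is precisely what lets us transfer the computation to $\xi_t$, and it follows from the memorylessness of the rate-$1$ collapse clocks together with the fact that the Yule dynamics inside a colony are driven by clocks independent of the rest of the system.
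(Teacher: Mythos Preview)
Your argument is correct. The paper does not actually supply a proof of this remark; it is stated as a bare observation immediately after Lemma~\ref{L: lemaaux1}, relying on the reader to see that the distribution of the \emph{attempted} colonization pattern at a collapse depends only on the internal Yule growth, the $\mathrm{Exp}(1)$ collapse time, the $p$-thinning, and the uniform neighbor choices---none of which are affected by the occupancy of neighboring sites---so that it coincides with the law of $W_m(\lambda,p)$ in $\xi_t$. Your write-up makes this reasoning explicit, including the memorylessness point for the collapse clock in $\eta_t$, and is exactly the justification the remark is gesturing at.
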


\begin{proof}[Proof of Lemma \ref{L: lemaaux1}]  $(i)$ Consider a colony at some vertex $x$ of $\mathcal{G}$. Let $Y$ be the number of individuals in the colony at collapse time. Then 

\begin{eqnarray}\label{E1: lemaaux1}
\mathbb{P}[Y=k]=\int_0^{\infty} e^{-t} e^{-\lambda t}(1-e^{-\lambda t})^{k-1}dt =\frac{1}{\lambda}B\left(1+\frac{1}{\lambda},k\right),
\end{eqnarray}
where the last equality is obtained by the substitution $u=e^{-\lambda t}$ and the definition of the beta function.\\

Enumerate  each neighbor of vertex $x$ from 1 to $m$, then  
$W_m(\lambda,p)=\sum_{i=1}^m I_i,$ where $I_i$ is the indicator function of the event \{A new colony is created in the $i-$th neighbor of $x$.\}. Hence, 
\begin{eqnarray}\label{E2: lemaaux1}
\mathbb{E}[W_m(\lambda,p)]=\sum_{i=1}^m \mathbb{P}[I_i=1]=m\mathbb{P}[I_1=1].
\end{eqnarray}

Observe that
\begin{eqnarray}
\mathbb{P}[I_1=1|Y=k]
=1-\left(1-\frac{p}{m}\right)^k.\nonumber
\end{eqnarray} 

Therefore, 
\begin{eqnarray}\label{E3: lemaaux1}
\mathbb{P}[I_1=1]&=&\sum_{k=1}^\infty\left[1-\left(1-\frac{p}{m}\right)^k\right]\mathbb{P}[Y=k] \nonumber \\
&=&1-\frac{1}{\lambda} \sum_{k=1}^\infty \left(1-\frac{p}{m}\right)^k B\left(1+\frac{1}{\lambda},k\right) 
\end{eqnarray}
where the last equality is obtained by (\ref{E1: lemaaux1}). Substituting (\ref{E3: lemaaux1}) in (\ref{E2: lemaaux1}) we obtain the desired result.\\

$(ii)$ Observe that
\begin{eqnarray}\label{E4: lemaaux1}
\mathbb{P}[W_m(\lambda,p)=m]&=&\mathbb{P}[I_1=1,\ldots,I_m=1]\nonumber\\
&=&1-\mathbb{P}[I_i=0 \text{ for some } i \in {1,\ldots m}] \nonumber\\
&\geq &1-\sum_{i=1}^m \mathbb{P}[I_i=0] \nonumber\\
&=&1-m \mathbb{P}[I_1=0] \nonumber\\
&=&1-\frac{m}{\lambda} \sum_{k=1}^\infty \left(1-\frac{p}{m}\right)^k B\left(1+\frac{1}{\lambda},k\right) \nonumber\\
&\geq &1-\frac{m}{\lambda} \sum_{k=1}^\infty \left(1-\frac{p}{m}\right)^k B\left(1,k\right) 
\end{eqnarray}
Letting $\lambda\rightarrow \infty$ in (\ref{E4: lemaaux1}) we obtain the result.

\end{proof}

\subsection{Proofs of main results}

\begin{proof}[Proof of Theorem \ref{extinction} (i)]

Consider $\xi_t$ starting with one colony at the origin and let $Z_0=1.$ This colony we
call the $0$-th generation.
Upon collapse of that colony a random number of new colonies are created. Denote this 
random number by $Z_1$. These are the first generation colonies. Every first generation colony
gives birth (at different random times) to a random number of new colonies. These new colonies
are the second generation colonies and their total number is denoted by $Z_2$.
More generally, let $n \geq 1$, if $Z_{n-1} = 0$ then $Z_n=0$, if
$Z_{n-1} \geq 1$ then $Z_n$ is the total number of colonies created by the previous generation colonies.

We claim that  $Z_n$, $n=0,1,\dots$ is a Galton-Watson process. This is so because the offsprings
of different colonies in the process $\xi_t$ have the same distribution and are independent.

The process $Z_n$ dies out if and only if $\mathbb{E}[Z_1]\leq 1$.
From Lemma~\ref{L: lemaaux1}$.(i)$ we know that $\mathbb{E}[Z_1]=\mu(m).$ 

Observe that if the process $Z_n$ dies out, the same happens to $\xi_t$ and hence to $\eta_t$.
It is easy to see that the proof works if the process starts from any finite number of colonies.
\end{proof}
  
\begin{proof}[Proof of Theorem \ref{extinction} (ii)]

From the monotonic property of $\eta_t$ it suffices to show local extinction for the process
starting with one individual at each vertex of $\mathbb{Z}^d$. We will actually prove
local extinction for the process $\xi_t$ starting with one individual at each vertex of $\mathbb{Z}^d.$

Fix a vertex $x$. If at time $t$ there exists a colony at vertex $x$ (for the process $\xi_t$)
then it must descend from a colony present at time $0$. Assume that the colony at $x$
descends from a colony at some site $y$.   Let $Z_n(y)$ be the number of colonies at the
$n$-th generation of the colony that started at $y$. The process $Z_n(y)$ has the same distribution 
as the process $Z_n$ defined above. In order for a descendent of $y$ to eventually reach $x$ the
process $Z_n(y)$ must have survived for at least $d(x,y)$ generations.
This is so because each generation gives birth only on nearest neighbors vertices. The process
$Z_n(y)$ is a Galton-Watson process with $Z_0(y)=1$ and mean offspring $\mu=\mu(2d).$

Let $n=d(x,y)$, then
\[\mathbb{P}(Z_n(y)\geq 1)\leq \mathbb{E}(Z_n(y))=\mu^n=\mu^{d(x,y)}\]
and
\[ \sum_{y\in\mathbb{Z}^d}\mathbb{P}(Z_n(y)\geq 1)\leq \sum_{y\in\mathbb{Z}^d}\mu^{d(x,y)} = \]
\[ \sum_{n \ge 1} \#\{x \in \mathbb{Z}^d: d(x,y)=n\} \ \mu^n  = \sum_{n \ge 1} {n+d-1 \choose n} \mu^n <\infty,\] 
for $\mu<1$.

The Borel-Cantelli lemma shows that almost surely there are only finitely many $y$'s such that descendents
from $y$ eventually reach $x$. From $(i)$ we know that a process starting from
a finite number of individuals dies out almost surely. Hence, after a finite random time there will be no colony at vertex $x$.
\end{proof}

\begin{proof}[Proof of Theorem \ref{extinction} (iii)]

The proof is analogous to $(ii)$. In this case, $\mu=\mu(d+1)$ and 

\[ \sum_{y\in\mathbb{T}^d} \mu^{d(x,y)}=\sum_{n \ge 1} (d+1) d^{n-1}\mu^n <\infty, \]
for  $\mu<1/d$.
\end{proof}

\begin{proof}[Proof of Theorem \ref{T: survival}:]

We first give the proof on the one dimensional lattice $\mathbb{Z}$.

We start by giving an informal construction 
of the process. We put a Poisson process with rate 1 at every site of $\mathbb{Z}$.
All the Poisson processes are independent. At the Poisson process jump times the colony
at the site collapses if there is a colony. If not, nothing happens. We start the process with finitely
many colonies. Each colony starts with a single individual and is associated to a Yule process with
birth rate $\la$. At collapse time, given that the colony at site $x$ has $n$ individuals we have a
binomial random variable with parameters $(n,p)$. The binomial gives the number $k$ of
potential survivors. If $k\geq 1$ then each survivor attempts to found a new colony on $x+1$
with probability $\frac{1}{2}$ or on $x-1$ also with  probability $\frac{1}{2}$.
The attempt succeeds on an empty site. We associate a new Yule process to a new colony,
independently of everything else.

We use the block construction presented in Bramson and Durrett~\cite{B&D}. First some notation.
For integers $m$ and $n$ we define

$$I=[-L,L] \qquad I_m=2mL+I,$$

$$B=(-4L,4L)\times [0,T]\qquad B_{m,n}=(2mL,nT)+B,$$
where
$$T=\frac{5}{2} L$$
and
$$\mathbb {L}=\left\{ (m,n)\in \mathbb{Z}^2: m+n\mbox{ is even}\right \}.$$

We say that the interval $I_m$ is {\sl half-full} if at least every other site of
$I_m$ is occupied. That is, the gap between two occupied sites in $I_m$ is at most one site.

We declare $(m,n)\in \mathbb{L}$ wet if starting with $I_m$ half-full at time $nT$
then $I_{m-1}$ and $I_{m+1}$ are also half-full at time $(n+1)T$. Moreover, we want
the last event to happen using only the Poisson and Yule processes inside the box $B_{m,n}$.
That is, we consider the process {\sl restricted} to $B_{m,n}$.

We are going to show that for any $\epsilon>0$ there are $\la$, $L$ and $T$ so that for any $(m,n)\in  \mathbb{L}$
$$P((m,n)\mbox{ is wet})\geq 1-\epsilon.$$

By translation invariance it is enough to prove this for $(m,n)=(0,0)$. The proof has two steps.

$\bullet$ Let $E$ be the event that every collapse
in the finite space-time box $B$ is followed by at least one attempted colonization on the left and one on the right of
 the collapsed site. We claim that for every $\ee>0$, we can pick $L$, $T$ and  $\la>0$ large enough so that $P(E)\geq 1-\ee.$
We now give the outline of why this is true. Since collapse times are given by rate one Poisson processes on each site
of $B$ the total number of collapses inside $B$ is bounded above by a Poisson distribution with rate $(8L+1)T$. Hence, with
high probability there are less than $2(8L+1)T$ collapses inside $B$ for $L$ large enough. We also take $\la$ large enough so that at every collapsing time
the colony will have so many individuals that attempted colonizations to the left and right will be almost certain (see Lemma~\ref{L: lemaaux1}$.(ii)$). Since the number of collapses can be
bounded with high probability the probability of the event $E$ can be made arbitrarily close to 1.

$\bullet$ At time 0 we start the process with the interval $I$ half-full. Let $r_t$ and $\ell_t$ be respectively
the leftmost and rightmost occupied sites at time $t\geq 0$. Conditioned on the event $E$ it is easy to see
that the interval $[\ell_t, r_t]$ is half-full at any time $t\leq T$. Observe also that conditioned on $E$, every time there is a collapse 
at $r_t$ then $r_t$ jumps to $r_t+1$. Since the number of collapses at $r_t$ is a Poisson process with rate 1 we have that
$\frac{r_t}{t}$ converges to 1. Hence, for $T=\frac{5}{2} L$ we have that $r_T$ belongs to $(3L,4L)$ with a probability arbitrarily
close to 1 provided $L$ is large enough. A symmetric argument shows that $\ell_T$ belongs to $(-4L,-3L)$. Since the interval
$[\ell_T,r_T]$ contains $I_{-1}$ and $I_1$, both of these intervals are half-full. Hence, for any $\ee>0$ we can pick $L$ and $\la$ large enough
so that $P((0,0))\mbox{ is wet})\geq 1-\epsilon.$

The preceding construction gives a coupling between our colonization and collapse model and an oriented percolation model on $ \mathbb{L}$. The oriented percolation model is 1-dependent and it is well known that for $\ee>0$ small enough $(0,0)$ will be in an infinite wet cluster which contains infinitely many vertices like $(0,2n)$, see Durrett~\cite{DURRETT1984}. That fact corresponds, by the coupling, to local survival in the colonization and collapse model.
Note that the proof was done for the process restricted to the boxes $B_{m,n}$. However, if
this model survives then so does the unrestricted model. This is so because the model is attractive and more births can only help survival. \\

Consider now $\mathbb{Z}^d$ with $d \ge 2$. 
First observe that from the case $d=1$ we have a sufficient condition for local survival for the process
restricted to a fixed line
of $\mathbb{Z}^d$. Since the model is attractive this is enough to show local survival on $\mathbb{Z}^d$.
This is so because the unrestricted model will bring more births (but not more deaths) to the sites on
the fixed line.

For  $\mathbb{T}^d$ the  local survival follows analogously as in $\mathbb{Z}^d$, observing that 
$\mathbb{Z}$ is embedded in $\mathbb{T}^d$.
\end{proof}

\begin{obs}
Our argument shows that both critical values, $\lambda_c(p,\mathbb{Z}^d)$ and $\lambda_c(p,\mathbb{T}^d)$, decrease with $d$. The more difficult issue is whether the critical value is strictly decreasing. We conjecture it is but this is a hard question even for the contact process, see Liggett~\cite{Liggett2}.
\end{obs}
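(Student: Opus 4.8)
The plan is to prove the monotonicity by induction on $d$, comparing the $(d+1)$-dimensional model with the $d$-dimensional one through a sublattice (resp.\ subtree) embedding, in the spirit of the reduction ``from a line to $\mathbb{Z}^d$'' used in the proof of Theorem~\ref{T: survival}. It suffices to show $\lambda_c(p,\mathbb{Z}^{d+1})\le\lambda_c(p,\mathbb{Z}^d)$ for every $d\ge1$, and similarly on the trees; I describe the lattice case, the tree case being identical once $\mathbb{T}^d$ is realised as a $(d+1)$-regular subtree of $\mathbb{T}^{d+1}$. Identify $\mathbb{Z}^d$ with the slice $H:=\mathbb{Z}^d\times\{0\}\subset\mathbb{Z}^{d+1}$, fix $\lambda>\lambda_c(p,\mathbb{Z}^d)$, and start $CC(\mathbb{Z}^d,\lambda,p)$ from a single colony; by definition it survives with positive probability, and we must deduce the same for $CC(\mathbb{Z}^{d+1},\lambda,p)$.

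I would first build the two processes on one probability space, driving the colonies on $H$ by the same rate-$1$ collapse clocks and the same Yule processes. At the collapse of a colony at $x\in H$ with $Y$ individuals, couple the $\mathrm{Binom}(Y,p)$ surviving individuals so that those which, in $\mathbb{Z}^{d+1}$, aim at one of the $2d$ neighbours $(x\pm e_i,0)$ lying in $H$ are matched with the survivors of the companion $\mathbb{Z}^d$-colony aiming at the same site, whereas survivors aiming at the two transverse neighbours $(x,\pm1)$ are used only in the $(d+1)$-dimensional process. Propagating this generation by generation as in the proof of Theorem~\ref{extinction}(i), and bounding the true interacting dynamics from above by the auxiliary process $\xi_t$ exactly as there, one should obtain that $CC(\mathbb{Z}^{d+1},\lambda,p)$ observed on $H$ is, apart from the survivors diverted into the transverse directions, at least as active as $CC(\mathbb{Z}^d,\lambda,p)$; attractiveness --- births off $H$ never remove a colony on $H$ --- lets the off-$H$ colonies be dropped from the lower bound.

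The main obstacle is precisely that caveat. In our model a survivor picks a uniform neighbour, so a fixed neighbour is targeted by a given survivor with probability $1/\deg$; passing from $\mathbb{Z}^d$ to $\mathbb{Z}^{d+1}$ thins the within-$H$ attempts (degree $2d\to2(d+1)$) and opens two transverse directions, and the naive coupling above delivers only $\lambda_c(p,\mathbb{Z}^{d+1})\le\lambda_c(\tfrac{d}{d+1}p,\mathbb{Z}^d)$, which is weaker than the assertion since $\lambda_c(\cdot,\mathbb{Z}^d)$ is nonincreasing in $p$. To close the gap one must show that the descendant colonies which step into the extra coordinate and eventually return to $H$ more than compensate this thinning --- equivalently, that a colony of the $\mathbb{Z}^{d+1}$-process, when one counts every descendant that ever returns to $H$, has offspring law stochastically dominating $W_{2d}(\lambda,p)$ of Lemma~\ref{L: lemaaux1}. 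I would attack this through the explicit Beta-function expression there, the fact --- readily checked from that expression --- that $m\mapsto\mathbb{E}[W_m(\lambda,p)]$ is nondecreasing, and a generating-function comparison of the associated multitype branching envelopes. I expect this compensation step, rather than the coupling bookkeeping, to be the delicate point; it is, in essence, the same difficulty that keeps the strict version out of reach, just as for the contact process. With it in hand, the domination transfers both local and global survival from $CC(\mathbb{Z}^d,\lambda,p)$ to $CC(\mathbb{Z}^{d+1},\lambda,p)$, and the identical argument on $\mathbb{T}^{d+1}$ completes the induction.
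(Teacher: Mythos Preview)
The paper does not actually give a proof of this remark: it merely asserts that ``our argument shows'' monotonicity, pointing back to the embedding step at the end of the proof of Theorem~\ref{T: survival}. What that step literally establishes is that the process on $\mathbb{Z}^d$ \emph{restricted to a line}---i.e.\ with off-line colonization attempts suppressed---is dominated by the full $\mathbb{Z}^d$ process. You have put your finger on the reason this does \emph{not} yield $\lambda_c(p,\mathbb{Z}^{d+1})\le\lambda_c(p,\mathbb{Z}^d)$: in this model a survivor picks a neighbour uniformly, so restricting $CC(\mathbb{Z}^{d+1},\lambda,p)$ to a hyperplane $H\cong\mathbb{Z}^d$ produces in-$H$ dynamics that coincide with $CC(\mathbb{Z}^d,\lambda,p\cdot\tfrac{d}{d+1})$, not with $CC(\mathbb{Z}^d,\lambda,p)$. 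The domination therefore only delivers $\lambda_c(p,\mathbb{Z}^{d+1})\le\lambda_c\bigl(p\cdot\tfrac{d}{d+1},\mathbb{Z}^d\bigr)$, which goes the wrong way once one uses that $\lambda_c(\cdot,\mathbb{Z}^d)$ is nonincreasing in $p$. This is a genuine gap---present already in the paper's remark, not just in your attempt---and your diagnosis of it is exactly right. (The contact-process analogy the remark seems to rely on breaks down precisely because there the birth rate onto each fixed neighbour is $\lambda$ regardless of the degree.)

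Where your proposal falls short is in closing the gap. Your compensation idea---that descendants which leave $H$ and later return make up for the thinning---is plausible heuristics but is not carried out, and nothing in the paper supplies it. The monotonicity of $m\mapsto\mathbb{E}[W_m(\lambda,p)]$ from Lemma~\ref{L: lemaaux1} concerns only the mean offspring of the \emph{dominating} branching envelope $\xi_t$; it gives neither stochastic domination of the true return-to-$H$ offspring by $W_{2d}(\lambda,p)$ nor any control of the interacting process $\eta_t$ from below. As you yourself observe, this compensation step is essentially the same obstruction that makes \emph{strict} monotonicity hard, so invoking it to get weak monotonicity is circular in spirit. In short: you have correctly identified that the paper's one-line justification does not suffice, but your proposed repair remains a sketch rather than a proof.
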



\begin{thebibliography}{99}


\bibitem{B&D}
{M. Bramson and R. Durrett}
Simple proof of the stability criterion of Gray and Griffeath. 
\textit{Probability Theory and Related Fields} 
\textbf{80}, 293-298. (1988). 

\bibitem{Brockwell} {P.J. Brockwell.} 
The extinction time for a general birth and death process with catastrophes.
\textit{Journal of Applied Probability} \textbf{23}, 851-858. (1986).

\bibitem{DURRETT1984} {R. Durrett.}  
Oriented Percolation in two dimensions. 
\textit{The Annals of Probability.} \textbf{12} (4), 999-1040. (1984).

\bibitem{Hanski} {I. Hanski}
Metapopulation Ecology.
\textit{Oxford Univ. Press, Oxford.} (1999).

\bibitem{Kapodistria} {S. Kapodistria, T. Phung-Duc and J. Resing.} 
Linear birth/immigration-death process with binomial catastrophes.
\textit{Probability in the Engineering and Informational Sciences} \textbf{30} (1), 79-111 (2016).

\bibitem{Liggett} {T. Liggett.}
Interacting particle systems. 
\textit{Springer-Verlag.} (1985).

\bibitem{Liggett2} {T. Liggett.}
Stochastic Interacting Systems: Contact, Voter and Exclusion Processes.
 \textit{Springer, New York.} (1999).


\bibitem{Luke}{Y.L. Luke.} 
 The Special Functions and Their Approximations.
 \textit{Academic Press, New York.} vol. 1. (1969)

\bibitem{S2014} {R. Schinazi.} 
Does random dispersion help survival?
\textit{Journal of Statistical Physics}, \textbf{159} (1),101-107. (2015).












\end{thebibliography}
\end{document}